\providecommand{\tabularnewline}{\\}
\numberwithin{equation}{section}
\numberwithin{figure}{section}
\theoremstyle{plain}
\newtheorem{thm}{\protect\theoremname}
  \theoremstyle{plain}
  \newtheorem{lem}[thm]{\protect\lemmaname}
  \theoremstyle{remark}
  \newtheorem{rem}[thm]{\protect\remarkname}
  \theoremstyle{plain}
  \newtheorem{prop}[thm]{\protect\propositionname}
  \providecommand{\lemmaname}{\inputencoding{latin9}Lemma}
  \providecommand{\propositionname}{\inputencoding{latin9}Proposition}
  \providecommand{\remarkname}{\inputencoding{latin9}Remark}
\providecommand{\theoremname}{\inputencoding{latin9}Theorem}
\begin{document}

\title{ODE Solvers Using Band-limited Approximations}

\author{G. Beylkin$^{*}$ and K. Sandberg$^{**}$ }

\address{$^{*}$Department of Applied Mathematics \\
 University of Colorado at Boulder \\
 526 UCB \\
 Boulder, CO 80309-0526 \\
$^{**}$Computational Solutions, Inc.\\
1800 30th Street, Suite 210B\\
Boulder, Colorado 80301}
\begin{abstract}
We use generalized Gaussian quadratures for exponentials to develop
a new ODE solver. Nodes and weights of these quadratures are computed
for a given bandlimit $c$ and user selected accuracy $\epsilon$,
so that they integrate functions $e^{ibx}$, for all $|b|\le c$,
with accuracy $\epsilon$. Nodes of these quadratures do not concentrate
excessively near the end points of an interval as those of the standard,
polynomial-based Gaussian quadratures. Due to this property, the usual
implicit Runge-Kutta (IRK) collocation method may be used with a large
number of nodes, as long as the method chosen for solving the nonlinear
system of equations converges. We show that the resulting ODE solver
is symplectic and demonstrate (numerically) that it is A-stable. We
use this solver, dubbed Band-limited Collocation (BLC-IRK), for orbit
computations in astrodynamics. Since BLC-IRK minimizes the number
of nodes needed to obtain the solution, in this problem we achieve
speed close to that of the traditional explicit multistep methods. 
\end{abstract}

\thanks{This research was partially supported by AFOSR grant FA9550-07-1-0135,
NSF grant DMS-0612358, DOE/ORNL grants 4000038129 and DE-FG02-03ER25583. }

\keywords{Generalized Gaussian quadratures for exponentials, symplectic ODE
solver, Band-limited Collocation Implicit Runge-Kutta method (BLC-IRK)}

\maketitle

\section{Introduction}

Current methods for solving ODEs, be that multistep or Runge-Kutta,
are based on polynomial approximations of functions. However, both
classical and recent results \cite{SLE-POL:1961,LAN-POL:1961,SLEPIA:1983,XI-RO-YA:2001,BEY-MON:2002,BEY-SAN:2005,BO-GA-SA:2013,OS-RO-XI:2013}
indicate that in many situations band-limited functions provide a
better tool for numerical integration and interpolation of functions
than the traditional polynomials. We construct a new method for solving
the initial value problem for Ordinary Differential Equations (ODEs)
using band-limited approximations and demonstrate certain advantages
of such approach. As an example, we consider orbit computations in
astrodynamics as a practical application for the new ODE solver as
well as a gauge to ascertain its performance.

It is well-known that choosing between equally spaced and unequally
spaced nodes on a specified time interval, in other words, choosing
a multistep vs a collocation based Runge-Kutta method, results in
significantly different properties of ODE solvers. For example, multistep
schemes have $\left\{ \mathcal{R}e(z)\le0,\,\, z\in\mathbb{C}\right\} $
as the region of absolute stability (A-stable) only if their order
does not exceed $2$, the so-called Dahlquist barrier. In contrast,
an A-stable implicit Runge-Kutta (IRK) scheme may be of arbitrary
order. A class of A-stable IRK schemes uses the Gauss-Legendre quadrature
nodes on each time interval and the order of such methods is $2\nu$,
where $\nu$ is the number of nodes (see, e.g., \cite{ISERLE:1996}).
A-stability assures that growth and decay of numerical solutions exactly
mimics that of the analytic solutions of the test problem which, in
turn, implies that the choice of step size involves only accuracy
consideration. 

Another numerical property of interest, that of preservation of volume
in the phase space, identifies symplectic integrators. Symplectic
integrators preserve a particular conserved quantity of Hamiltonian
systems as well as an approximate Hamiltonian. In problems of orbit
determination, a symplectic integrator would maintain the correct
orbit more or less indefinitely with the error accumulating only in
a position along that orbit, thus closely reproducing a particular
behavior of analytic solutions of nonlinear Hamiltonian systems. We
note that IRK schemes which use the Gauss-Legendre nodes are symplectic
(see, e.g., \cite{ISERLE:1996}).

While IRK schemes with the Gauss-Legendre nodes provide an excellent
discretization of a system of ODEs, using many such nodes on a specified
time interval is not practical. The nodes of the Gauss-Legendre quadratures
(as well as any other polynomial based Gaussian quadratures) accumulate
rapidly towards the end points of an interval. A heuristic reason
for such accumulation is that these quadratures have to account for
a possible rapid growth of polynomials near the boundary. 

In this paper we demonstrate that, within IRK collocation methods,
quadratures based on polynomials may be replaced by quadratures for
band-limited exponentials. The nodes of these quadratures do not accumulate
significantly toward the end points of an interval (a heuristic reason
for an improved arrangement of nodes is that the exponentials do not
grow anywhere within the interval). Our method addresses numerical
integration of ODEs whose solutions are well approximated by band-limited
exponentials. Band-limited exponentials have been successfully used
in problems of wave propagation \cite{BEY-SAN:2005} (see also \cite{SAN-WOJ:2011}),
where it is natural to approximate solutions by band-limited functions.
While solutions of ODEs are typically well approximated by band-limited
exponentials as well, there may be exceptions since some ODEs may
have polynomial solutions. In such cases the use of polynomial based
quadratures may be more efficient.

Unlike the classical Gaussian quadratures for polynomials that integrate
exactly a subspace of polynomials up to a fixed degree, the Gaussian
type quadratures for exponentials use a finite set of nodes to integrate
an infinite set of functions, namely, $\left\{ e^{ibx}\right\} _{\left|b\right|\le c}$
on the interval $\left|x\right|\le1$. As there is no way to accomplish
this exactly, these quadratures are constructed so that all exponentials
with $|b|\le c$ are integrated with accuracy of at least $\epsilon$,
where $\epsilon$ is arbitrarily small but finite. Such quadratures
were constructed in \cite{BEY-MON:2002} and, via a different approach
in \cite{XI-RO-YA:2001} (see also \cite{RE-BE-MO:2013}). As observed
in \cite{BEY-SAN:2005}, quadrature nodes of this type do not concentrate
excessively toward the end of the interval. The density of nodes increases
toward the end points of the interval only by a factor that depends
on the desired accuracy $\epsilon$ but not on the overall number
of nodes. 

Using quadratures to integrate band-limited exponentials with bandlimit
$2c$ and accuracy $\epsilon^{2}$, we naturally arrive at a method
for interpolation of functions with bandlimit $c$ and accuracy $\epsilon$
(see \cite{XI-RO-YA:2001,BEY-MON:2002}). It turns out that the nodes
and weights of quadratures to interpolate with accuracy $\epsilon\approx10^{-15}$
can, in fact, be constructed using only the standard double precision
machine arithmetic. However, generating the integration matrix for
the new double precision BLC-IRK method requires using quadruple precision
in the intermediate calculations. Importantly, once generated, the
quadratures and the integration matrix are applied using only the
standard double precision.

While analytically the classical Gauss-Legendre quadratures for polynomials
are exact, in practice their accuracy is limited by the machine precision.
By choosing (interpolation) accuracy $\epsilon\approx10^{-15}$, our
integrator is effectively ``exact'' within the double precision
of machine arithmetic. Remarkably, using a particular construction
of the integration matrix, we show that BLC-IRK method is (exactly)
symplectic and, with high accuracy, A-stable. This result was unexpected
and indicates that properties of approximate quadratures for band-limited
exponentials need to be explored further.

While IRK schemes require solving a system of nonlinear equations
at each time step, it does not automatically imply that such schemes
are always computationally more expensive than explicit schemes. In
the environment where the cost of function evaluation is high, the
balance between the necessary iteration with fewer nodes of an implicit
scheme vs significantly greater number of nodes of an explicit scheme
(but no iteration), may tilt towards an implicit scheme. In problems
of astrodynamics, we use an additional observation that most iterations
can be performed with an inexpensive (low fidelity) gravity model,
making implicit schemes with a large number of nodes per time interval
practical. We select the problem of orbit determination as an example
where our approach is competitive with numerical schemes that are
currently in use (see \cite{B-J-B-A:2012p,B-J-B-S-A:2013}). We take
advantage of the reduced number of function calls to the full gravity
model in a way that appears difficult to replicate using alternative
schemes.

In order to accelerate solving a system of nonlinear equations, we
modify the scheme by explicitly exponentiating the linear part of
the force term. For the problem of orbit computations this modification
accelerates convergence of iterations by (effectively) makes use of
the fact that the system is of the second order. So far we did not
study possible acceleration of iterations using spectral deferred
correction approach as in \cite{DU-GR-RO:2000,LAY-MIN:2005,HU-JI-MI:2006,JIA-HUA:2008}. 

We start by providing background information on quadratures for band-limited
functions in Section~\ref{sec:Preliminaries:-quadratures-for}. We
then describe BLC-IRK method in Section~\ref{sec:BQC-IRK-method}
(with some details deferred to Appendix). In Section~\ref{sec:Applications}
we detail our algorithm and provide examples.

\section{\label{sec:Preliminaries:-quadratures-for}Preliminaries: quadratures
for band-limited functions}

\subsection{Band-limited functions as a replacement of polynomials}

The quadratures constructed in \cite{XI-RO-YA:2001,BEY-MON:2002,RE-BE-MO:2013}
break with the conventional approach of using polynomials as the fundamental
tool in analysis and computation. The approach based on polynomial
approximations has a long tradition and leads to such notions as the
order of convergence of numerical schemes, polynomial based interpolation,
and so on. Recently, an alternative to polynomial approximations has
been developed; it turns out that constructing quadratures for band-limited
functions, e.g., exponentials $e^{ibx}$, with $|b|\le c$, where
$c$ is the bandlimit, in many cases leads to significant improvement
in performance of algorithms for interpolation, estimation and solving
partial differential equations \cite{BEY-SAN:2005,SAN-WOJ:2011,KON-ROK:2012}.

\subsection{\label{sec:Bases-for-bandlimited}Bases for band-limited functions }

It is well-known that a function whose Fourier Transform has compact
support can not have compact support itself unless it is identically
zero. On the other hand, in physics duration of all signals is finite
and their frequency response for all practical purposes is also band-limited.
Thus, it is important to identify classes of functions which are essentially
time and frequency limited. Towards this end, it is natural to analyze
an operator whose effect on a function is to truncate it both in the
original and the Fourier domains. Indeed, this has been the topic
of a series of seminal papers by Slepian, Landau and Pollak, \cite{SLE-POL:1961,LAN-POL:1961,LAN-POL:1962,SLEPIA:1964,SLEPIA:1965,SLEPIA:1978,SLEPIA:1983},
where they observed (\textit{inter alia}) that the eigenfunctions
of such operator (see~(\ref{PSWFdef}) below) are the Prolate Spheroidal
Wave Functions (PSWFs) of classical Mathematical Physics.

While periodic band-limited functions may be expanded into Fourier
series, neither the Fourier series nor the Fourier integral may be
used efficiently for non-periodic functions on \textit{intervals}.
This motivates us to consider a class of functions (not necessarily
periodic) admitting a representation via exponentials $\left\{ e^{ibx}\right\} _{\left|b\right|\le c}$,
$x\in\left[-1,1\right]$, with a fixed parameter $c$ (bandlimit).
Following \cite{BEY-MON:2002}, let us consider the linear space of
functions 
\[
\mathcal{E}_{c}=\left\{ u\!\in\! L^{\infty}(\left[-1,1\right])\ \left|\right.\ u(x)=\sum_{k\in\mathbb{Z}}a_{k}e^{icb_{k}x}:\left\{ a_{k}\right\} {}_{k\in\mathbb{Z}}\!\in\! l^{1},\ b_{k}\!\in\left[-1,1\right]\right\} .
\]
Given a finite accuracy $\epsilon$, we represent the functions in
$\mathcal{E}_{c}$ by a fixed set of exponentials $\left\{ e^{ic\tau_{k}x}\right\} _{k=1}^{M}$,
where $M$ is as small as possible. It turns out that by finding quadrature
nodes $\left\{ \tau_{k}\right\} {}_{k=1}^{M}$ and weights $\left\{ w_{k}\right\} {}_{k=1}^{M}$
for exponentials with bandlimit $2c$ and accuracy $\epsilon^{2}$,
we in fact obtain (with accuracy $\epsilon$) a basis for $\mathcal{E}_{c}$
with bandlimit $c$ \cite{BEY-MON:2002}. 

The generalized Gaussian quadratures for exponentials are constructed
in \cite{BEY-MON:2002} (see \cite{XI-RO-YA:2001} and \cite{RE-BE-MO:2013}
for different constructions), which we summarize as 
\begin{lem}
\label{lem:Quadratures}For $c>0$ and any $\epsilon>0$, there exist
nodes $-1<\tau_{1}<\tau_{2}<\dots<\tau_{M}<1$ and corresponding weights
$w_{k}>0$, such that for any $x\in\left[-1,1\right]$, 
\begin{equation}
\left|\int_{-1}^{1}e^{ictx}\ dt-\sum_{k=1}^{M}w_{k}e^{ic\tau_{k}x}\right|<\epsilon,\label{blfquad}
\end{equation}
where the number of nodes, $M$, is (nearly) optimal. The nodes and
weights maintain the natural symmetry, $\tau_{k}=-\tau_{M-k+1}$ and
$w_{k}=w_{M-k+1}$.\end{lem}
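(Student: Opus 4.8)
The plan is to reduce the infinite family of integrands $\{e^{ictx}\}_{|x|\le1}$, viewed as functions of the quadrature variable $t\in[-1,1]$, to a finite-dimensional system, and then to apply the classical existence theory for generalized Gaussian quadratures. First I would analyze the finite Fourier transform
\[
(F_{c}u)(x)=\int_{-1}^{1}e^{ictx}u(t)\,dt,
\]
whose eigenfunctions are precisely the prolate spheroidal wave functions $\psi_{0},\psi_{1},\dots$ discussed above, with eigenvalues $\mu_{0},\mu_{1},\dots$ whose moduli decay super-exponentially once the index exceeds roughly $2c/\pi$ \cite{SLE-POL:1961,SLEPIA:1983}. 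This rapid decay is the crucial input: it shows that the family $\{e^{ictx}\}_{|x|\le1}$ has finite $\epsilon$-rank. Concretely, there is an integer $N=N(c,\epsilon)$, with $N\approx 2c/\pi+O(\log(1/\epsilon))$, such that every exponential $e^{ictx}$ with $|x|\le1$ lies within $\epsilon$ of the span of $\{\psi_{0},\dots,\psi_{N-1}\}$ in $L^{\infty}([-1,1])$. Consequently, a quadrature with positive weights that integrates these finitely many PSWFs exactly automatically integrates every $e^{ictx}$ to accuracy $O(\epsilon)$, the implied constant being controlled by the total quadrature weight $\sum_{k}w_{k}=2$; absorbing this constant into the truncation level yields exactly the estimate (\ref{blfquad}).

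Next I would invoke the theory of generalized Gaussian quadratures for Chebyshev systems. The PSWFs are real, and $\psi_{j}$ has exactly $j$ simple zeros in $(-1,1)$ together with definite parity; this oscillation and sign structure is what lets one show that $\{\psi_{0},\dots,\psi_{2M-1}\}$ forms a Chebyshev system on $[-1,1]$. For such a system the Markov--Krein existence theorem, in the computational form underlying the constructions of \cite{BEY-MON:2002,XI-RO-YA:2001}, produces a unique quadrature with $M$ nodes $-1<\tau_{1}<\dots<\tau_{M}<1$, all interior, and strictly positive weights $w_{k}>0$, integrating all $2M$ functions exactly. Choosing $2M$ to match the $\epsilon$-rank $N$ then gives the claimed nodes and weights. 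Near-optimality of $M$ follows from the same rank count: the numerical rank $N/2$ of the exponential family is, up to the accuracy $\epsilon$, a lower bound on the number of nodes any such quadrature can carry, so $M$ cannot be reduced substantially.

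Finally I would establish the symmetry. Both the interval $[-1,1]$ and the family $\{e^{ictx}\}_{|x|\le1}$ are invariant under the reflection $t\mapsto-t$ (which merely sends $x\mapsto-x$), and the PSWFs have definite parity, $\psi_{j}(-t)=(-1)^{j}\psi_{j}(t)$. Hence if $(\{\tau_{k}\},\{w_{k}\})$ is the generalized Gaussian quadrature, then its reflection $(\{-\tau_{k}\},\{w_{k}\})$ is again a quadrature with the same defining properties; by the uniqueness part of the Markov--Krein theorem the two must coincide as node--weight sets, which forces $\tau_{k}=-\tau_{M-k+1}$ and $w_{k}=w_{M-k+1}$.

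The hard part will be the rigorous verification that the truncated system of PSWFs genuinely forms a Chebyshev system on the \emph{closed} interval, so that the existence-and-uniqueness theorem applies with positive weights and strictly interior nodes. The required total-positivity and zero-counting estimates are delicate, and they must be controlled uniformly in the $\epsilon$-truncation rather than for the exact (infinite) family. In practice the quadrature is actually located by a continuation (Newton) iteration, so one must also argue that the associated Jacobian remains nonsingular along the continuation path, guaranteeing that the solution persists and stays within the admissible region.
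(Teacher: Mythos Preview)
The paper does not actually prove Lemma~\ref{lem:Quadratures}; it is stated as a summary of constructions carried out elsewhere, with the sentence ``The generalized Gaussian quadratures for exponentials are constructed in \cite{BEY-MON:2002} (see \cite{XI-RO-YA:2001} and \cite{RE-BE-MO:2013} for different constructions), which we summarize as\ldots'' preceding the statement. The subsequent Remark adds only that in \cite{BEY-MON:2002} the nodes are identified as zeros of \emph{discrete} PSWFs corresponding to small eigenvalues. So there is no in-paper proof to compare against; the relevant comparison is with the cited references.

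Your outline is essentially the strategy of \cite{XI-RO-YA:2001}: exploit the spectral decay of $F_{c}$ to reduce to finitely many PSWFs, use that the PSWFs form a Chebyshev system (the paper itself notes this property), and then appeal to the generalized Gaussian quadrature theory for such systems; the symmetry argument via uniqueness is standard and correct. The primary reference \cite{BEY-MON:2002}, by contrast, proceeds somewhat differently: it works with weighted exponentials and locates the nodes as zeros of discrete PSWFs rather than via the Markov--Krein existence theorem for the continuous PSWF system. Your sketch therefore tracks one of the cited constructions rather than the other, which is perfectly acceptable given that the paper treats the lemma as imported. The technical caveats you flag (Chebyshev property on the closed interval, nonsingularity of the continuation Jacobian) are exactly the points where the cited papers do the real work, so your identification of the ``hard part'' is accurate. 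One minor notational slip: in the paper $\mu_{j}$ denotes the eigenvalues of $Q_{c}$, not of $F_{c}$; the eigenvalues of $F_{c}$ are the $\lambda_{j}$.
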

\begin{rem}
The construction in \cite{BEY-MON:2002} is more general and yields
quadratures for band-limited exponentials integrated with a weight
function. If the weight function is $1$ as in Lemma~\ref{lem:Quadratures},
then the approach in \cite{BEY-MON:2002} identifies the nodes of
the generalized Gaussian quadratures in (\ref{blfquad}) as zeros
of the \textit{Discrete} Prolate Spheroidal Wave Functions (DPSWFs)
\cite{SLEPIA:1978}, corresponding to small eigenvalues.
\end{rem}
Next we consider band-limited functions, 
\[
\mathcal{B}_{c}=\{f\in L^{2}(\mathbb{R})\,\left|\right.\hat{f}(\omega)=0\,\,\mbox{for}\,\,\left|\omega\right|\ge c\},
\]
and briefly summarize some of the results in \cite{SLE-POL:1961,LAN-POL:1961,LAN-POL:1962,SLEPIA:1964,SLEPIA:1983}.
Let us define the operator $F_{c}:\,\, L^{2}\left[-1,1\right]\rightarrow L^{2}\left[-1,1\right]$,
\begin{equation}
F_{c}(\psi)(\omega)=\int_{-1}^{1}e^{icx\omega}\psi(x)dx,\label{PSWFdef}
\end{equation}
where $c>0$ is the bandlimit. We also consider the operator $Q_{c}=\frac{c}{2\pi}F_{c}^{*}F_{c}$,
\begin{equation}
Q_{c}(\psi)(y)=\frac{1}{\pi}\int_{-1}^{1}\frac{\sin(c(y-x))}{y-x}\psi(x)\,\, dx.\label{Qdef}
\end{equation}
The eigenfunctions $\psi_{0}^{c},\psi_{1}^{c},\psi_{2}^{c},\cdots$
of $Q_{c}$ coincide with those of $F_{c}$, and the eigenvalues $\mu_{j}$
of $Q_{c}$ are related to the eigenvalues $\lambda_{j}$ of $F_{c}$
as 
\begin{equation}
\mu_{j}=\frac{c}{2\pi}|\lambda_{j}|^{2},\,\,\,\,\, j=0,1,2,\dots.\label{mula}
\end{equation}
While all $\mu_{j}<1$, $j=0,1,\dots$, for large $c$ the first approximately
$2c/\pi$ eigenvalues $\mu_{j}$ are close to $1$. They are followed
by $\mathcal{O}(\log c)$ eigenvalues which decay exponentially fast
forming a transition region; the rest of the eigenvalues $\mu_{j}$
are very close to zero.

The key result in \cite{SLE-POL:1961} states that there exists a
strictly increasing sequence of real numbers $\gamma_{0}<\gamma_{1}\dots$,
such that $\psi_{j}^{c}$ are eigenfunctions of the differential operator,
\begin{equation}
L_{c}\psi_{j}^{c}\equiv\left(-(1-x^{2})\,\frac{d^{2}}{dx^{2}}\,+\,2x\frac{d}{dx}\,+\, c^{2}x^{2}\right)\psi_{j}^{c}(x)\,=\,\gamma_{j}\psi_{j}^{c}(x)\,.\label{eq:diffEQforProlates}
\end{equation}
The eigenfunctions of $L_{c}$ have been known as the angular Prolate
Spheroidal Wave Functions (PSWF) before the connection with (\ref{PSWFdef})
was discovered in \cite{SLE-POL:1961} by demonstrating that $L_{c}$
and $F_{c}$ commute. We note that if $c\to0$, then it follows from
(\ref{eq:diffEQforProlates}) that, in this limit, $\psi_{j}^{c}$
become the Legendre polynomials. In many respects, PSWFs are strikingly
similar to orthogonal polynomials; they are orthonormal, constitute
a Chebychev system, and admit a version of Gaussian quadratures \cite{XI-RO-YA:2001}. 

Since the space $\mathcal{E}_{c}$ is dense in $\mathcal{B}_{c}$
(and vice versa) \cite{BEY-MON:2002}, we note that the quadratures
in \cite{XI-RO-YA:2001} may potentially be used for the purposes
of this paper as well (the nodes of the quadratures in \cite{XI-RO-YA:2001}
and those used in this paper are close but are not identical). Importantly,
given accuracy $\epsilon$, the functions $\psi_{0}^{c},\psi_{1}^{c},\psi_{2}^{c},\cdots,\psi_{M-1}^{c}$
may be used as a basis for interpolation on the interval $\left[-1,1\right]$
with $\tau_{1},\tau_{2},\cdots,\tau_{M}$ as the interpolation nodes,
provided that these are quadrature nodes constructed for the bandlimit
$2c$ and accuracy $\epsilon^{2}$. Given functions $\psi_{0}^{c},\psi_{1}^{c},\psi_{2}^{c},\cdots,\psi_{M-1}^{c}$,
we can construct an analogue of the Lagrange interpolating polynomials,
$R_{k}^{c}(x)=\sum_{j=0}^{M-1}\alpha_{kj}\psi_{j}^{c}(x)$, $x\in\left[-1,1\right]$,
by solving 
\begin{equation}
\delta_{kl}=R_{k}^{c}(\tau_{l})=\sum_{j=0}^{M-1}\alpha_{kj}\psi_{j}^{c}(\tau_{l})\label{eq:ExactInterpolatingFnc}
\end{equation}
for the coefficients $\alpha_{kj}$. The matrix $\psi_{j}^{c}(\tau_{l})$
in (\ref{eq:ExactInterpolatingFnc}) is well conditioned.

A well-known problem associated with the numerical use of orthogonal
polynomials is concentration of their roots near the ends of the interval.
Let us consider the ratio 
\begin{equation}
r(M,\epsilon)=\frac{\tau_{2}-\tau_{1}}{\tau_{\lfloor M/2\rfloor}-\tau_{\lfloor M/2\rfloor-1}},\label{node-ratio}
\end{equation}
where ``$\lfloor M/2\rfloor$'' denotes the least integer part,
and look at it as a function of $M$. Observing that the distance
between nodes of Gaussian quadratures for exponentials changes monotonically
from the middle of an interval toward its end points, and that the
smallest distance occurs between the nodes closest to the end point,
the ratio (\ref{node-ratio}) may be used as a measure of node accumulation.
For example, the distance between the nodes near the end points of
the standard Gaussian quadratures for polynomials decreases as $\mathcal{O}(1/M^{2})$,
so that we have $r(M,\epsilon)=\mathcal{O}(1/M)$, where $M$ is the
number of nodes. In Figure~\ref{ratio-plot} we illustrate the behavior
of $r(M,\epsilon)$ for the nodes of quadratures for band-limited
exponentials. This ratio approaches a constant that depends on the
accuracy $\epsilon$ but does not depend on the number of nodes.

Another important property of quadratures for exponentials emerges
if we compare the critical sampling rate of a smooth \textit{periodic}
function, to that of smooth \textit{non-periodic} function defined
on an interval. Considering bandlimit $c$ as a function of the number
of nodes, $M$, and the desired accuracy $\epsilon$, we observe that
the oversampling factor, 
\[
\alpha(M,\epsilon)=\frac{\pi M}{c(M,\epsilon)}>1,
\]
approaches $1$ for large M. We recall that in the case of the Gaussian
quadratures for polynomials, this oversampling factor approaches $\frac{\pi}{2}$
rather than $1$ (see e.g. \cite{GOT-ORS:1977}). 

\begin{figure}
\begin{centering}
\includegraphics[scale=1.2,bb = 0 0 300 200]{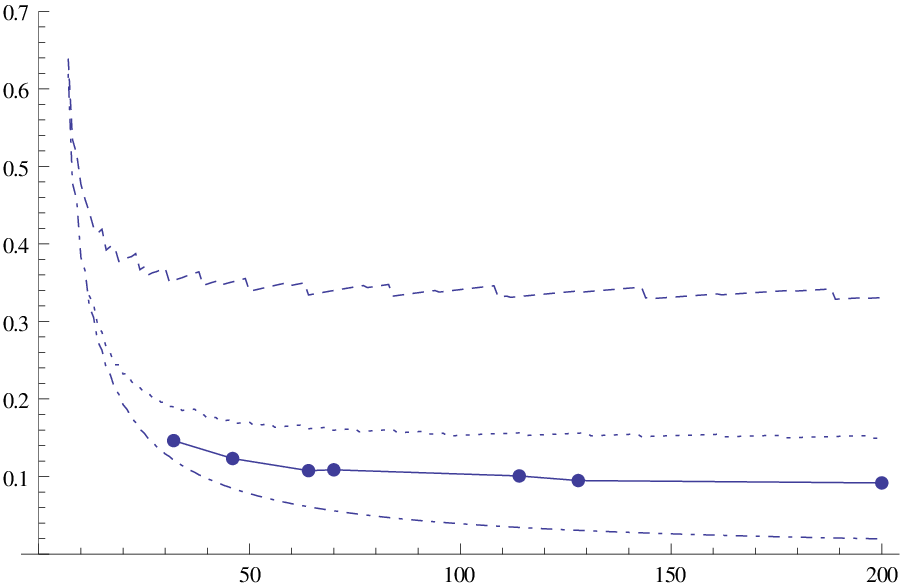}
\par\end{centering}

\caption{\label{ratio-plot}The ratio $r(M,\epsilon)$ in (\ref{node-ratio})
as a function of the number of nodes $M$ and interpolation accuracy
$\epsilon\approx10^{-3.5}$ (top curve, dashed), $\epsilon\approx10^{-8.5}$
(middle curve, dotted) and $\epsilon\approx10^{-13}$ (middle curve,
solid). The dots on the solid curve indicate the number of nodes of
quadratures used in our numerical experiments. The bottom curve shows
this ratio for the Gauss-Legendre nodes.}
\end{figure}

\subsection{\label{sub:Interpolating-Bandlimiting-functions}Interpolating bases
for band-limited functions }

A basis of interpolating band-limited functions for the bandlimit
$c$ and accuracy $\epsilon$ plays the same role in the derivation
of a system of nonlinear equations for solving ODEs as the bases of
Lagrange interpolating polynomials defined on the Gauss-Legendre nodes.
While (\ref{eq:ExactInterpolatingFnc}) relies on available solutions
of the differential equation (\ref{eq:diffEQforProlates}), interpolating
basis functions may also be obtained by solving the integral equation
(\ref{PSWFdef}) (see \cite{BEY-MON:2002,RE-BE-MO:2013}).

We start by first constructing a quadrature for the bandlimit $2c>0$
and accuracy threshold $\epsilon^{2}>0$, yielding $M$ nodes $\left\{ \tau_{m}\right\} _{m=1}^{M}$
and weights $\left\{ w_{m}\right\} _{m=1}^{M}$. For the inner product
of two functions $f,g\in\mathcal{E}_{c}$, we have
\[
\left|\int_{-1}^{1}f(t)g(t)dt-\sum_{m=1}^{M}w_{m}f(\tau_{m})g(\tau_{m})\right|\le\epsilon^{2}.
\]
Following \cite{BEY-MON:2002}, we discretize (\ref{PSWFdef}) using
nodes $\left\{ \tau_{m}\right\} _{m=1}^{M}$ and weights $\left\{ w_{m}\right\} _{m=1}^{M}$
and obtain an algebraic eigenvalue problem, 
\begin{equation}
\sum_{l=1}^{M}w_{l}e^{ic\tau_{m}\tau_{l}}\mathbf{\Psi}_{j}(\tau_{l})=\eta_{j}\mathbf{\Psi}_{j}(\tau_{m}).\label{eq:APSWFdefa}
\end{equation}
The approximate PSWFs on $[-1,1]$ are then defined consistent with
(\ref{PSWFdef}) as 
\begin{equation}
\Psi_{j}(x)=\frac{1}{\eta_{j}}\sum_{l=1}^{M}w_{l}e^{icx\tau_{l}}\mathbf{\Psi}_{j}(\tau_{l}),\label{eq:APSWFdefb}
\end{equation}
where $\eta_{j}$ are the eigenvalues and $\mathbf{\Psi}_{j}(\tau_{l})$
the eigenvectors in (\ref{eq:APSWFdefa}). Following \cite{BEY-MON:2002},
we then define the interpolating basis for band-limited functions
as 

\begin{equation}
R_{k}(x)=\sum_{l=1}^{M}r_{kl}e^{ic\tau_{l}x},\,\,\,\, k=1,\ldots,M,\label{eq:rkx}
\end{equation}
where 
\begin{equation}
r_{kl}=\sum_{j=1}^{M}w_{k}\mathbf{\Psi}_{j}(\tau_{k})\frac{1}{\eta_{j}}\mathbf{\Psi}_{j}(\tau_{l})w_{l}.\label{eq:CoefsForInterpolatingBases}
\end{equation}
 It is shown in \cite{BEY-MON:2002} that the functions $R_{k}(x)$
are interpolating, $R_{k}(\tau_{l})=\delta_{kl}$.

\section{BLC-IRK method\label{sec:BQC-IRK-method}}

\subsection{\label{sec:Discretization-of-Picard}Discretization of Picard integral
equation}

We consider the initial value problem for a system of ODEs, 
\[
\mathbf{y}'=\mathbf{f}(t,\mathbf{y}),\,\,\,\,\,\mathbf{y}(0)=\mathbf{y_{0}},
\]
or, equivalently, 
\begin{equation}
\mathbf{y}(t)=\mathbf{y_{0}}+\int_{0}^{t}\mathbf{f}(s,\mathbf{y}(s))\ ds.\label{eq:intEq}
\end{equation}
It is sufficient to discretize (\ref{eq:intEq}) on the interval $\left[0,t\right]$
since, by shifting the time variable, the initial condition may always
be set at $t=0$. We require 
\[
\mathbf{y}'(t\tau_{j})=\mathbf{f}(t\tau_{j},\mathbf{y}(t\tau_{j})),\,\,\, j=1,\dots,M,
\]
where $\{\tau_{j}\}_{j=1}^{M}$ are Gaussian nodes for band-limited
exponentials on $[0,1]$ (constructed for an appropriate bandlimit
$c$ and accuracy $\epsilon$). We approximate 
\begin{equation}
\|\mathbf{f}(t\tau,\mathbf{y}(t\tau))-\sum_{j=1}^{M}\mathbf{f}(t\tau_{j},\mathbf{y}(t\tau_{j}))R_{j}(\tau)\|\leq\epsilon,\,\,\,\,\tau\in\left[0,1\right]\label{eq:fcnApp}
\end{equation}
where $R_{j}(\tau)$ are interpolating basis functions associated
with these quadratures and briefly described in Section~\ref{sub:Interpolating-Bandlimiting-functions}
(see \cite{BEY-MON:2002,BEY-SAN:2005} for details). Using (\ref{eq:fcnApp}),
we replace $\mathbf{f}$ in (\ref{eq:intEq}) and evaluate $\mathbf{y}(t\tau)$
at the quadrature nodes yielding a nonlinear system, 
\begin{eqnarray}
\mathbf{y}(t\tau_{k}) & = & \mathbf{y_{0}}+\sum_{j=1}^{M}\mathbf{f}(t\tau_{j},\mathbf{y}(t\tau_{j}))\int_{0}^{\tau_{k}}R_{j}(s)ds\label{eq:intEqDis}\\
 & = & \mathbf{y_{0}}+\sum_{j=1}^{M}S_{kj}\mathbf{f}(t\tau_{j},\mathbf{y}(t\tau_{j})),\nonumber 
\end{eqnarray}
 where $S_{kj}=\int_{0}^{\tau_{k}}R_{j}(s)ds$ is the integration
matrix and $k=1,\dots M$. After solving for $\{\mathbf{y}(t\tau_{j})\}_{j=1}^{M}$,
we have from (\ref{eq:intEq}) 
\begin{equation}
\mathbf{y}(t)=\mathbf{y_{0}}+\sum_{j=1}^{M}w_{j}\mathbf{f}(t\tau_{j},\mathbf{y}(t\tau_{j})),\label{ImplicitRK}
\end{equation}
where $\{w_{j}\}_{j=1}^{M}$ are the quadrature weights. The result
is an implicit Runge-Kutta method (IRK) where the usual Gauss-Legendre
quadratures are replaced by Gaussian quadratures for band-limited
exponentials. 

The nodes, weights, and the entries of the integration matrix are
typically organized in the Butcher tableau,

\begin{center}
\begin{tabular}{c|c}
$\tau$  & $S$\tabularnewline
\hline 
 & $w^{t}$\tabularnewline
\end{tabular}. 
\par\end{center}

Unlike in the standard IRK method based on Gauss-Legendre quadratures,
we solve (\ref{eq:intEqDis}) on a time interval containing a large
number of quadrature nodes, since these nodes do not concentrate excessively
near the end points. This implies that the interval $\left[0,t\right]$
may be selected to be large in comparison with the usual choices in
RK methods.

\subsection{Exact Linear Part}

In many problems (including that of orbit computations in astrodynamics),
the right hand side of the ODE, $\mathbf{f}(t,\mathbf{y})$, may be
split into a linear and nonlinear part, 
\[
\mathbf{f}(t,\mathbf{y}(t))=\mathbf{L}\mathbf{y}(t)+\mathbf{g}(t,\mathbf{y}(t)),
\]
 so that the integral equation (\ref{eq:intEq}) may be written as
\begin{equation}
\mathbf{y}(t)=e^{tL}\mathbf{y_{0}}+\int_{0}^{t}e^{(t-s)\mathbf{L}}\mathbf{g}(s,\mathbf{y}(s))\ ds.\label{eq:intEqELP}
\end{equation}
If the operator $e^{t\mathbf{L}}$ can be computed efficiently, this
formulation leads to savings when solving the integral equation iteratively. 

We discretize (\ref{eq:intEqELP}) by using (\ref{eq:fcnApp}) and
obtain 
\begin{eqnarray}
\mathbf{y}(t\tau_{k}) & = & e^{t\tau_{k}\mathbf{L}}\mathbf{y_{0}}+\sum_{j=1}^{M}e^{t(\tau_{k}-\tau_{j})\mathbf{L}}\mathbf{g}(t\tau_{j},\mathbf{y}(t\tau_{j}))\int_{0}^{\tau_{k}}R_{j}(s)ds\nonumber \\
 & = & e^{t\tau_{k}\mathbf{L}}\mathbf{y_{0}}+\sum_{j=1}^{M}S_{kj}e^{t(\tau_{k}-\tau_{j})\mathbf{L}}\mathbf{g}(t\tau_{j},\mathbf{y}(t\tau_{j}))\label{eq:intEqDis2}
\end{eqnarray}
 where $S_{kj}=\int_{0}^{\tau_{k}}R_{j}(s)ds$. We note that (\ref{eq:intEqDis})
is a special case of (\ref{eq:intEqDis2}) with $\mathbf{L}=0$ and
$\mathbf{g}=\mathbf{f}$.

\subsection{\label{sec:Symplectic-integrators}Symplectic integrators}

Following \cite{SANZ-S:1988}, let us introduce matrix $\mathcal{M}=\{m_{kj}\}_{k,j=1}^{M}$
for an $M$-stage IRK scheme, 
\begin{equation}
m_{kj}=w_{k}S_{kj}+w_{j}S_{jk}-w_{k}w_{j},\label{Matrix-M}
\end{equation}
 where the weights $w=\{w_{k}\}_{k=1}^{M}$ and the integration matrix
$S=\{S_{kj}\}_{k,j=1}^{M}$ define the Butcher's tableau for the method.

\noindent It is shown in \cite{SANZ-S:1988} that 
\begin{thm}
If matrix $\mathcal{M}=0$ in (\ref{Matrix-M}), then an $M$-stage
IRK scheme is symplectic. 
\end{thm}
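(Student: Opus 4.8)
The plan is to establish symplecticity at the level of the linearized flow, in the spirit of \cite{SANZ-S:1988}. I would apply the scheme to an arbitrary autonomous Hamiltonian system $\mathbf{y}'=J^{-1}\nabla H(\mathbf{y})$, where $J$ is the standard structure matrix satisfying $J^{\top}=-J$ and $J^{2}=-I$. To say that the one-step map $\mathbf{y_0}\mapsto\mathbf{y}(t)$ is symplectic means precisely that it preserves the two-form $\mathcal{W}(u,v)=u^{\top}Jv$. Because the scheme is a Runge--Kutta method with Butcher tableau $(S,w)$, it suffices to show that $\mathcal{W}$ is conserved between any two solutions of the variational equations obtained by differentiating the stage relations (\ref{eq:intEqDis}) and the update (\ref{ImplicitRK}) with respect to the initial datum $\mathbf{y_0}$.

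First I would fix notation, writing $Y_k=\mathbf{y}(t\tau_k)$ and $F_k=\mathbf{f}(t\tau_k,Y_k)$, so that (\ref{eq:intEqDis}) and (\ref{ImplicitRK}) read $Y_k=\mathbf{y_0}+\sum_j S_{kj}F_j$ and $\mathbf{y}(t)=\mathbf{y_0}+\sum_j w_jF_j$. Differentiating with respect to $\mathbf{y_0}$ and introducing the stage-derivative variations $g_k=D\mathbf{f}(Y_k)\,\delta Y_k$ together with a second, independent set $\tilde g_k=D\mathbf{f}(Y_k)\,\widetilde{\delta Y}_k$, I obtain
\[
\delta Y_k=\delta\mathbf{y_0}+\sum_j S_{kj}g_j,\qquad \delta\mathbf{y}(t)=\delta\mathbf{y_0}+\sum_j w_j g_j,
\]
and analogously for the tilded variations. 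Expanding $\mathcal{W}(\delta\mathbf{y}(t),\widetilde{\delta\mathbf{y}}(t))$ from the update relation then produces the diagonal term $\mathcal{W}(\delta\mathbf{y_0},\widetilde{\delta\mathbf{y_0}})$, two cross terms, and one fully quadratic term in the $g$'s.

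The crucial input is the Hamiltonian structure, entering through the identity $J\,D\mathbf{f}(Y)=\nabla^2 H(Y)$, which is symmetric; this gives $\mathcal{W}(u,D\mathbf{f}(Y)v)+\mathcal{W}(D\mathbf{f}(Y)u,v)=0$ for all $u,v$, and in particular $\mathcal{W}(\delta Y_k,\tilde g_k)+\mathcal{W}(g_k,\widetilde{\delta Y}_k)=0$. I would use this by eliminating $\delta\mathbf{y_0}$ and $\widetilde{\delta\mathbf{y_0}}$ from the two cross terms via the stage relations $\delta\mathbf{y_0}=\delta Y_k-\sum_j S_{kj}g_j$. The pieces carrying the symmetric Hessian then cancel in pairs by the identity above, and after relabeling the summation indices $k\leftrightarrow j$ the surviving contributions collapse to
\[
\mathcal{W}\big(\delta\mathbf{y}(t),\widetilde{\delta\mathbf{y}}(t)\big)-\mathcal{W}\big(\delta\mathbf{y_0},\widetilde{\delta\mathbf{y_0}}\big)=-\sum_{k,j}\big(w_kS_{kj}+w_jS_{jk}-w_kw_j\big)\,\mathcal{W}(g_k,\tilde g_j)=-\sum_{k,j}m_{kj}\,\mathcal{W}(g_k,\tilde g_j).
\]
Since $\mathcal{M}=0$ by hypothesis, the right-hand side vanishes for every choice of variations; hence $\mathcal{W}$ is preserved by the one-step map and the scheme is symplectic.

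I expect the only real difficulty to be the bookkeeping of this bilinear-form expansion: one must substitute the stage equations into exactly the right cross terms, recognize that the skew-symmetry of $J$ combined with the symmetric Hessian forces the two stage-diagonal cross terms to cancel, and then perform the index swap $k\leftrightarrow j$ that symmetrizes $w_kS_{kj}$ into $w_kS_{kj}+w_jS_{jk}$. The conceptual crux is precisely that this cancellation of the stage-diagonal terms is what isolates the matrix $\mathcal{M}$ of (\ref{Matrix-M}); the remaining manipulations are mechanical. A minor point to address is that the argument is most transparent for autonomous Hamiltonians, the relevant case here, where $D\mathbf{f}=J^{-1}\nabla^2H$ is independent of the time argument.
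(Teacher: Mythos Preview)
The paper does not give its own proof of this theorem; it merely quotes the result, prefacing the statement with ``It is shown in \cite{SANZ-S:1988} that\ldots''. Your argument is correct and is precisely the standard Sanz-Serna proof: linearize the stage and update equations, exploit that $J\,D\mathbf{f}=\nabla^{2}H$ is symmetric to kill the stage-diagonal cross terms, and relabel indices so that the remaining contributions assemble into $\sum_{k,j}m_{kj}\,\mathcal{W}(g_k,\tilde g_j)$. There is nothing to compare against in the paper itself.
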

This condition, $\mathcal{M}=0$, is satisfied for the Gauss-Legendre
RK methods, see e.g. \cite{DEK-VER:1984,SANZ-S:1988}. We enforce
this condition for BLC-IRK method by an $\mathcal{O}(\epsilon^{2})$
modification of the weights and of the integration matrix. For convenience,
in what follows, we consider the band-limited exponentials and integration
matrix on the interval $\left[-1,1\right]$ rather than on the interval
$\left[0,1\right]$ usually used for ODEs. 
\begin{prop}
\label{prop:Let--and}Let $\{\tau_{j}\}_{j=1}^{M}$ and $\{w_{j}\}_{j=1}^{M}$
be quadrature nodes and weights for the bandlimit $2c$ and accuracy
$\epsilon^{2}$. Consider interpolating basis functions on these quadrature
nodes, $R_{k}(\tau)$, $R_{k}(\tau_{j})=\delta_{kj}$, $k,j=1,\dots,M$,
and define $F_{k}(\tau)=\int_{-1}^{\tau}R_{k}(s)\ ds$. Then we have

\begin{equation}
\left|\int_{-1}^{1}F_{j}(\tau)F'_{k}(\tau)\ d\tau-\sum_{l=1}^{M}w_{l}F_{j}(\tau_{l})F'_{k}(\tau_{l})\right|<\epsilon^{2}\label{intMatProof3}
\end{equation}
or 
\[
\left|\int_{-1}^{1}\left(\int_{-1}^{\tau}R_{j}(s)\ ds\right)\ R_{k}(\tau)\ d\tau-w_{k}\int_{-1}^{\tau_{k}}R_{j}(s)\ ds\right|<\epsilon^{2},
\]
and 
\begin{equation}
\left|\int_{-1}^{1}R_{k}(\tau)\ d\tau-\sum_{l=1}^{M}w_{l}R_{k}(\tau_{l})\right|<\epsilon^{2},\label{eq:IntegralOfInterpolFnc}
\end{equation}
or 
\[
\left|\int_{-1}^{1}R_{k}(s)\ ds-w_{k}\right|<\epsilon^{2}.
\]
\end{prop}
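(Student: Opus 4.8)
The plan is to reduce all four displayed inequalities to the single product-quadrature estimate recorded in Section~\ref{sub:Interpolating-Bandlimiting-functions}, namely that $|\int_{-1}^{1} f(t)g(t)\,dt - \sum_{l=1}^{M} w_l f(\tau_l) g(\tau_l)| \le \epsilon^2$ whenever $f,g \in \mathcal{E}_c$. The mechanism is that a product of two functions of bandlimit $c$ has bandlimit $2c$, which is exactly the class for which the quadrature $\{\tau_l\},\{w_l\}$ was constructed to be accurate to $\epsilon^2$; the task is therefore only to exhibit the right two factors in each case and then to simplify the quadrature sum using interpolation.

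First I would record the two structural facts needed about the interpolating functions. By the definition (\ref{eq:rkx}), each $R_k(x) = \sum_{l=1}^{M} r_{kl} e^{ic\tau_l x}$ belongs to $\mathcal{E}_c$, and by the Fundamental Theorem of Calculus $F'_k = R_k$. I would then check that the antiderivative $F_k(\tau) = \int_{-1}^{\tau} R_k(s)\,ds$ again lies in $\mathcal{E}_c$: integrating term by term gives $\sum_l \frac{r_{kl}}{ic\tau_l} e^{ic\tau_l \tau}$ plus an additive constant produced by the lower limit, and since the constant is the admissible zero-frequency exponential ($b=0 \in [-1,1]$), we obtain $F_k \in \mathcal{E}_c$.

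With these in hand the estimates follow quickly. For (\ref{intMatProof3}) I would apply the product estimate with $f = F_j$ and $g = F'_k = R_k$, both in $\mathcal{E}_c$; this is literally inequality (\ref{intMatProof3}). Its equivalent second form then comes from the interpolation property $R_k(\tau_l) = \delta_{kl}$ stated after (\ref{eq:CoefsForInterpolatingBases}), which collapses $\sum_l w_l F_j(\tau_l) F'_k(\tau_l)$ to the single surviving term $w_k F_j(\tau_k) = w_k \int_{-1}^{\tau_k} R_j(s)\,ds$. For (\ref{eq:IntegralOfInterpolFnc}) I would apply the same estimate with $f = R_k$ and $g \equiv 1$, the constant function being the zero-frequency member of $\mathcal{E}_c$, and again use $\sum_l w_l R_k(\tau_l) = w_k$ to pass to the second form.

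The step I expect to carry the real weight is the claim that the antiderivative $F_k$ remains band-limited, so that the product estimate genuinely applies to $F_j F'_k$. The term-by-term integration above is clean precisely when no quadrature node sits at the origin; the symmetry $\tau_k = -\tau_{M-k+1}$ forces a node at $0$ when $M$ is odd, and the corresponding zero-frequency summand of $R_j$ then integrates to a linear term $r_{j,l_0}\tau$ that does not belong to $\mathcal{E}_c$. The resulting extra contribution is $\tau R_k(\tau)$, which is still band-limited in the sense that $\tau R_k \in \mathcal{B}_c$ (its Fourier transform stays supported in $[-c,c]$), so one still expects the bandlimit-$2c$ quadrature to integrate it to $O(\epsilon^2)$. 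I would resolve this either by restricting attention to the even-$M$ quadratures used in the experiments, or by bounding this single defect directly and absorbing it into the $O(\epsilon^2)$ modification of the weights and integration matrix already invoked in Section~\ref{sec:Symplectic-integrators}.
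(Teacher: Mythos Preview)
Your approach is essentially the same as the paper's: both invoke the quadrature property for bandlimit $2c$ applied to the product $F_j F'_k$ (and to $R_k$ alone), and then collapse the quadrature sum via the interpolation property $R_k(\tau_l)=\delta_{kl}$. The paper's proof is terser---it simply asserts that ``the bandlimit of the product $F_j(\tau)F'_k(\tau)$ is less or equal to $2c$'' without justifying that $F_j\in\mathcal{E}_c$, whereas you spell out the termwise integration. Your odd-$M$ caveat about a node at the origin producing a linear term is a genuine subtlety that the paper does not address; your proposed resolutions (restrict to even $M$, or absorb the defect into the $\mathcal{O}(\epsilon^2)$ slack) are reasonable and more careful than what the paper does.
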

\begin{proof}
The relations in (\ref{intMatProof3}) and (\ref{eq:IntegralOfInterpolFnc})
is the property of the quadrature, since the bandlimit of the product
$F_{j}(\tau)F'_{k}(\tau)$ is less or equal to $2c$ and that of $R_{k}(\tau)$
is less or equal to $c$. Due to the interpolating property of $R_{k}(\tau)$,
we have
\begin{equation}
\sum_{l=1}^{M}w_{l}F_{j}(\tau_{l})F'_{k}(\tau_{l})=\sum_{l=1}^{M}\left(\int_{-1}^{\tau_{l}}R_{j}(s)\ ds\right)\ w_{l}R_{k}(\tau_{l})=w_{k}\int_{-1}^{\tau_{k}}R_{j}(s)\ ds\label{intMatProof1}
\end{equation}
and
\[
\sum_{l=1}^{M}w_{l}R_{k}(\tau_{l})=w_{k}
\]
Also, by definition, 
\[
\int_{-1}^{1}F_{j}(\tau)F'_{k}(\tau)\ d\tau=\int_{-1}^{1}\left(\int_{0}^{\tau}R_{j}(s)\ ds\right)\ R_{k}(\tau)\ d\tau,
\]
and the result follows.\end{proof}
\begin{thm}
\label{bandlimiteSymplectic} Let $\{\tau_{j}\}_{j=1}^{M}$ be quadrature
nodes of the quadrature for the bandlimit $2c$ and accuracy $\epsilon^{2}$
and $R_{k}(\tau)$, $R_{k}(\tau_{j})=\delta_{kj}$, $k,j=1,\dots,M$,
the corresponding interpolating basis. Let us define weights for the
quadrature as 
\begin{equation}
w_{k}=\int_{-1}^{1}R_{k}(\tau)d\tau\label{eq:DefWeights}
\end{equation}
 and the integration matrix as 
\begin{equation}
S_{kj}=\frac{\int_{-1}^{1}\left(\int_{-1}^{\tau}R_{j}(s)\ ds\right)\ R_{k}(\tau)\ d\tau}{w_{k}},\,\,\, k,j=1,\dots,M.\label{eq:IntegrationMatrixDef}
\end{equation}
 Then 
\begin{equation}
w_{k}S_{kj}+w_{j}S_{jk}-w_{k}w_{j}=0,\label{eq:symplectic condition}
\end{equation}
 and the implicit scheme using these nodes and weights is symplectic. \end{thm}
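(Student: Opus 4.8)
The plan is to show that the symplecticity condition (\ref{eq:symplectic condition}) holds \emph{exactly} --- not merely up to $\mathcal{O}(\epsilon^2)$ --- once the weights and the integration matrix are defined by (\ref{eq:DefWeights}) and (\ref{eq:IntegrationMatrixDef}), and then to invoke the Sanz-Serna criterion stated just above Proposition~\ref{prop:Let--and}: an $M$-stage IRK scheme is symplectic whenever the matrix $\mathcal{M}$ of (\ref{Matrix-M}) vanishes. The $\epsilon^2$ bounds of Proposition~\ref{prop:Let--and} will not actually enter the identity itself; the point is that the two definitions are chosen self-consistently, so that an integration-by-parts argument closes with no error term.

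First I would clear the denominator in (\ref{eq:IntegrationMatrixDef}), obtaining $w_k S_{kj}=\int_{-1}^1 F_j(\tau)F'_k(\tau)\,d\tau$, where $F_k(\tau)=\int_{-1}^\tau R_k(s)\,ds$ and $F'_k=R_k$ by the fundamental theorem of calculus. Adding the symmetric term and using the product rule gives
\begin{equation*}
w_k S_{kj}+w_j S_{jk}=\int_{-1}^1\bigl(F_j F'_k+F_k F'_j\bigr)\,d\tau=\int_{-1}^1\frac{d}{d\tau}\bigl(F_j(\tau)F_k(\tau)\bigr)\,d\tau=F_j(1)F_k(1)-F_j(-1)F_k(-1).
\end{equation*}
The next step is to evaluate these boundary values. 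The lower limit gives $F_j(-1)=0$, killing the second term, while the upper limit gives $F_k(1)=\int_{-1}^1 R_k(s)\,ds=w_k$; here the definition (\ref{eq:DefWeights}) of the weights is used in an essential way. Hence $w_k S_{kj}+w_j S_{jk}=w_j w_k$, which is exactly (\ref{eq:symplectic condition}), i.e. $m_{kj}=0$ for all $k,j$. With $\mathcal{M}=0$ established, symplecticity follows immediately from the quoted theorem.

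The computation is short, so I do not expect a serious analytic obstacle; the main conceptual point --- the one I would emphasize --- is \emph{why} the identity is exact even though the underlying band-limited quadrature is only approximate. The resolution is that (\ref{eq:DefWeights}) and (\ref{eq:IntegrationMatrixDef}) define $w_k$ and $w_k S_{kj}$ as genuine continuous integrals of the interpolating functions $R_k$, so the cancellation is driven purely by integration by parts together with the interpolation structure $F_k(-1)=0$, $F_k(1)=w_k$, and never appeals to the quadrature \emph{sum}. The complementary role of Proposition~\ref{prop:Let--and} is to certify that these exactly symplectic weights and matrix entries differ from the true band-limited quadrature data by only $\mathcal{O}(\epsilon^2)$, so that accuracy (and hence the near A-stability claimed elsewhere) is not sacrificed by enforcing $\mathcal{M}=0$. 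One minor point to verify along the way is that $w_k>0$, so that the division in (\ref{eq:IntegrationMatrixDef}) is legitimate; this holds because $w_k$ lies within $\epsilon^2$ of the positive quadrature weight guaranteed by Lemma~\ref{lem:Quadratures}.
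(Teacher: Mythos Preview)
Your proof is correct and follows essentially the same route as the paper: define $F_k(\tau)=\int_{-1}^{\tau}R_k(s)\,ds$, recognize $w_kS_{kj}+w_jS_{jk}$ as $\int_{-1}^{1}(F_jF_k)'\,d\tau$, and evaluate the boundary terms using $F_k(-1)=0$ and $F_k(1)=w_k$. Your added remarks on why the identity is exact and on the positivity of $w_k$ are helpful elaborations but do not diverge from the paper's argument.
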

\begin{proof}
Using Proposition~\ref{prop:Let--and}, we observe that the weights
defined in (\ref{eq:DefWeights}) are the same (up to accuracy $\epsilon^{2}$)
as those of the quadrature. The result follows by setting $F_{k}(\tau)=\int_{-1}^{\tau}R_{k}(\tau)\ d\tau$,
$F'_{k}(\tau)=R_{k}(\tau)$ and integrating by parts to obtain 
\[
\begin{split}w_{k}S_{kj}+w_{j}S_{jk}-w_{k}w_{j} & =\int_{-1}^{1}F_{j}(\tau)F'_{k}(\tau)\ d\tau+\int_{-1}^{1}F_{k}(\tau)F'_{j}(\tau)\ d\tau-w_{k}w_{j}\\
 & =F_{j}(1)F_{k}(1)-w_{k}w_{j}.
\end{split}
\]
 By the definition of the weights, we have $F_{k}(1)=w_{k}$ and,
hence, $F_{j}(1)F_{k}(1)-w_{k}w_{j}=0$. 
\end{proof}

\subsection{Construction of the integration matrix}

There are at least three approaches to compute the integration matrix.
Two of them, presented in the Appendix, rely on Theorem~\ref{bandlimiteSymplectic}
and differ in the construction of interpolating basis functions. In
what appears to be a simpler approach, the integration matrix may
also be obtained without computing interpolating basis functions explicitly
and, instead, using a collocation condition derived below together
with the symplectic condition (\ref{eq:symplectic condition}). 

We require that our method accurately solves the test problems 

\[
y'=ic\tau_{m}y,\,\,\,\,\, y(-1)=e^{-ic\tau_{m}},\,\,\, m=1,\dots,M,
\]
on the interval $\left[-1,1\right]$, where $\tau_{m}$ are the nodes
of the quadrature. Specifically, given solutions of these test problems,
$y_{m}(t)=e^{ic\tau_{m}t}$, we require that (\ref{eq:intEqDis})
holds at the nodes $t=\tau_{k}$ with accuracy $\epsilon$, 
\begin{equation}
\left|\frac{e^{ic\tau_{m}\tau_{k}}-e^{-ic\tau_{m}}}{ic\tau_{m}}-\sum_{j=1}^{M}S_{kj}e^{ic\tau_{m}\tau_{j}}\right|\le\epsilon,\,\,\,\, m,k=1,\dots,M.\label{eq:TestProblemEq}
\end{equation}
We then obtain the integration matrix as the solution of (\ref{eq:symplectic condition})
satisfying an approximate collocation condition (\ref{eq:TestProblemEq}).

We proceed by observing that (\ref{eq:symplectic condition}) suggests
that the integration matrix can be split into symmetric and antisymmetric
part. Defining the symmetric part of the integration matrix as 
\begin{equation}
T_{kj}=\frac{w_{k}w_{j}}{w_{k}+w_{j}},\label{eq:sym part}
\end{equation}
we set 
\begin{equation}
S_{kj}=T_{kj}+A_{kj}w_{j},\label{eq:decomposition into sym-antisym}
\end{equation}
and observe that it follows from (\ref{eq:symplectic condition})
that $A_{kj}$ is antisymmetric,

\[
A_{kj}+A_{jk}=0.
\]
Using (\ref{eq:decomposition into sym-antisym}) and casting (\ref{eq:TestProblemEq})
as an equality, we obtain equations for the matrix entries $A_{kj}$,
\begin{equation}
\sum_{j=1}^{M}A_{kj}w_{j}e^{ic\tau_{m}\tau_{j}}=\frac{e^{ic\tau_{m}\tau_{k}}-e^{-ic\tau_{m}}}{ic\tau_{m}}-\sum_{j=1}^{M}T_{kj}e^{ic\tau_{m}\tau_{j}},\,\,\,\, m,k=1,\dots,M.\label{eq:TestProblemEq-1}
\end{equation}
Splitting the real and imaginary parts of the right hand side, 
\[
\frac{e^{ic\tau_{m}\tau_{k}}-e^{-ic\tau_{m}}}{ic\tau_{m}}-\sum_{j=1}^{M}T_{kj}e^{ic\tau_{m}\tau_{j}}=u_{km}+iv_{km},
\]
we obtain 
\[
u_{km}=\left(\tau_{k}+1\right)\mbox{sinc}\left(c\tau_{m}(\tau_{k}+1)/2\right)\cos\left(c\tau_{m}(\tau_{k}-1)/2\right)-\sum_{j=1}^{M}T_{kj}\cos\left(c\tau_{m}\tau_{j}\right)
\]
and, since $T_{kj}=T_{k\left(M-j+1\right)}$due to the symmetry of
the weights, we arrive at 
\[
v_{km}=\left(\tau_{k}+1\right)\mbox{sinc}\left(c\tau_{m}(\tau_{k}+1)/2\right)\sin\left(c\tau_{m}(\tau_{k}-1)/2\right).
\]
We also have 

\[
u_{km}=\sum_{j=1}^{M}A_{kj}w_{j}\cos\left(c\tau_{m}\tau_{j}\right),\,\,\,\,\,\,\, v_{km}=\sum_{j=1}^{M}A_{kj}w_{j}\sin\left(c\tau_{m}\tau_{j}\right).
\]
Since matrices $\cos\left(c\tau_{m}\tau_{j}\right)$ and $\sin\left(c\tau_{m}\tau_{j}\right)$
are rank deficient, we choose to combine these equations 
\begin{equation}
u_{km}+v_{km}=\sum_{j=1}^{M}A_{kj}w_{j}\left(\cos\left(c\tau_{m}\tau_{j}\right)+\sin\left(c\tau_{m}\tau_{j}\right)\right).\label{eq:EquationForAntiSymPart}
\end{equation}
The number of unknowns in (\ref{eq:EquationForAntiSymPart}) is $M(M-1)/2$
since the matrix $A$ is antisymmetric. Instead of imposing additional
conditions due to antisymmetry of $A$, we proceed by solving (\ref{eq:EquationForAntiSymPart})
using quadruple precision (since this system is ill-conditioned).
We find matrix $\tilde{A}$ and discover that, while $S_{kj}=T_{kj}+\tilde{A}_{kj}w_{j}$
makes (\ref{eq:TestProblemEq}) into an equality, the matrix $\tilde{A}$
is not antisymmetric. We then enforce anti-symmetry by setting $A_{kj}=-A_{jk}=\left(\tilde{A}_{kj}-\tilde{A}_{jk}\right)/2$
and $S_{kj}=T_{kj}+A_{kj}w_{j}$. We then verify that the matrix $S$
satisfies the inequality (\ref{eq:TestProblemEq}).
\begin{rem}
\label{rem:Exact?}The fact that integration matrix satisfies (\ref{eq:symplectic condition})
and the inequality (\ref{eq:TestProblemEq}) indicates that, perhaps
by a slight modification of nodes and weights of the quadrature, it
might be possible to satisfy (\ref{eq:symplectic condition}) and
(\ref{eq:TestProblemEq}) with $\epsilon=0$. 
\end{rem}

\subsection{\label{sec:A-Stability-of-the}A-stability of the BLC-IRK method}

As shown in e.g. \cite[Section 4.3]{ISERLE:1996}, in order to ascertain
stability of an IRK method, it is sufficient to consider the rational
function
\begin{equation}
r(z)=1+z\mathbf{w}^{t}(I-zS)^{-1}\mathbf{1},\label{eq:EqForA-stability}
\end{equation}
where $S$ is the integration matrix, $\mathbf{w}$ is a vector of
weights and $\mathbf{1}$ is a vector with all entries set to $1$,
and verify that $\left|r(z)\right|\le1$ in the left half of the complex
plane, $\mathcal{R}e\left(z\right)\le0$. This function is an approximation
of the solution $e^{zt}$ at $t=1$ of the test problem 
\[
y'=zy,\,\,\,\,\, y(0)=1
\]
computed via (\ref{eq:intEqDis}) and (\ref{ImplicitRK}) on the interval
$\left[0,1\right]$. If all poles of $r(z)$ have a positive real
part, then it is sufficient to verify this inequality only on the
imaginary axis, $z=iy$, $y\in\mathbb{R}$. In fact, it may be possible
to show that $r(z)$ is unimodular on imaginary axis, $\left|r(iy)\right|=1$,
for $y\in\mathbb{R}$. Implicit Runge-Kutta methods based on Gauss-Legendre
nodes are A-stable (see e.g \cite{ISERLE:1996}) and, indeed, for
these methods $r(z)$ is unimodular on imaginary axis. 

Given an $M\times M$ matrix $S$ with $M_{1}$ complex eigenvalues
and $M_{2}$ real eigenvalues implies that the function $r(z)$ in
(\ref{eq:EqForA-stability}) has $2M_{1}+M_{2}=M$ poles. If this
function is unimodular on the imaginary axis then it is easy to show
that it has a particular form, 
\begin{equation}
r(z)=\prod_{k=1}^{M_{1}}\frac{z+\overline{\lambda}_{k}^{-1}}{z-\lambda_{k}^{-1}}\frac{z+\lambda_{k}^{-1}}{z-\overline{\lambda}_{k}^{-1}}\prod_{k'=1}^{M_{2}}\frac{z+\lambda_{k'}^{-1}}{z-\lambda_{k'}^{-1}}.\label{eq:r(z) as rational fnc}
\end{equation}
Currently, we do not have an analytic proof of A-stability of BLC-IRK
method; instead we verify (\ref{eq:r(z) as rational fnc}) numerically.
We compute eigenvalues of the integration matrix to obtain the poles
of $r(z)$ and check that all eigenvalues have a positive real part
separated from zero. For example, the integration matrix for the BLC-IRK
method with $64$ nodes (bandlimit $c=17\pi$) has all eigenvalues
with real part larger than $0.7\cdot10^{-3}$(see Figure~\ref{fig:Eigenvalues-of-the}).
One way to check that $r(z)$ has the form (\ref{eq:r(z) as rational fnc})
is to compute $r(-\overline{\lambda}_{k}^{-1})$ for complex valued
and $r(-\lambda_{k}^{-1})$ for real valued eigenvalues in order to
observe if these are its zeros. In fact, it is the case with high
(quadruple) precision. 

One can argue heuristically that since a rational function with $M$
poles has at most $2M$ real parameters (since matrix $S$ is real
its eigenvalues appear in complex conjugate pairs) and since, by construction,
$r(iy)$ for $\left|y\right|\le c$ is an accurate approximation to
$e^{iy}$ (which is obviously unimodular), $r(z)$ is then unimodular.
It remains to show it rigorously; a possible proof may depend on demonstrating
a conjecture in Remark~\ref{rem:Exact?}.

\begin{figure}
\vskip 1cm
\begin{centering}
\includegraphics[scale=0.85,bb = 0 0 300 200]{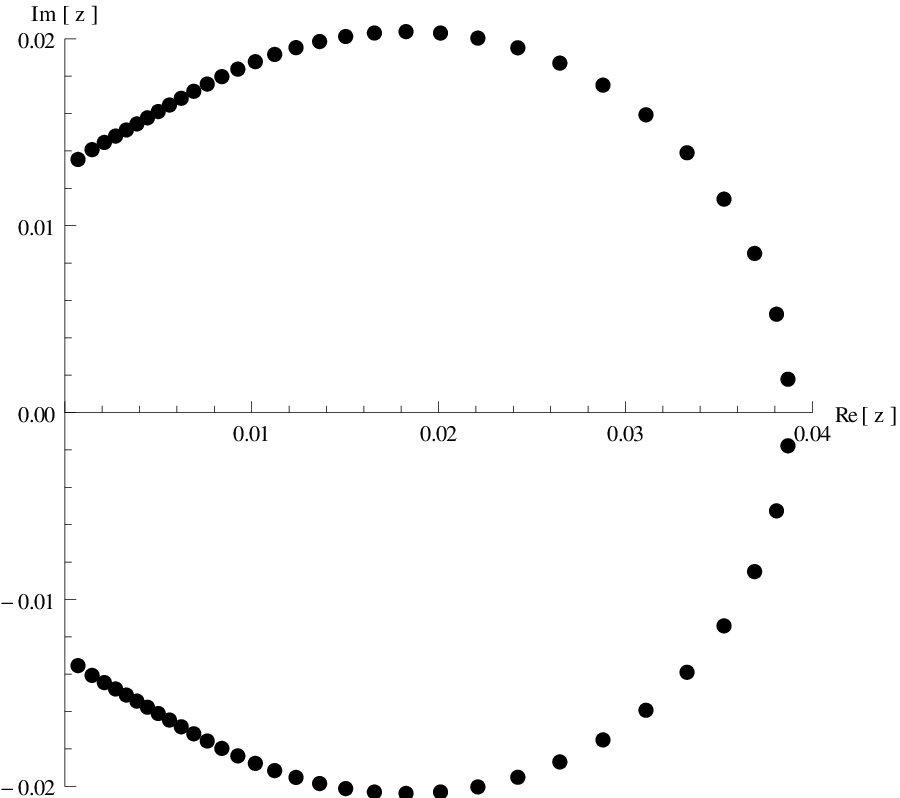}
\par\end{centering}
\vskip 1cm
\begin{centering}
\includegraphics[scale=0.85,bb = 0 0 300 200]{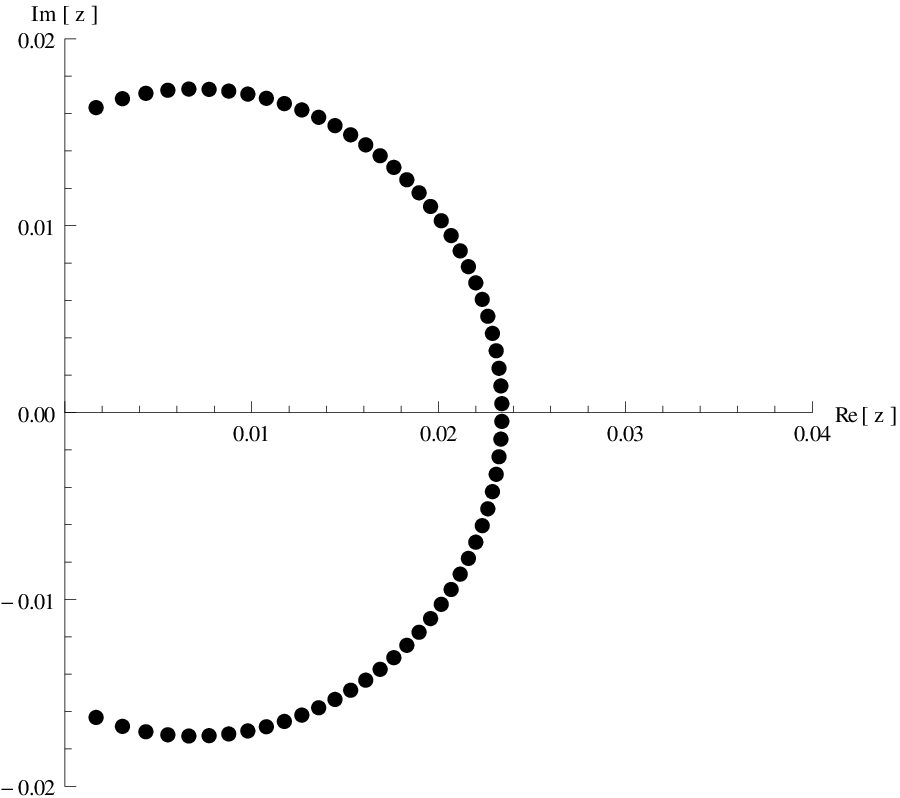}
\par\end{centering}

\caption{\label{fig:Eigenvalues-of-the}Eigenvalues (computed using quadruple
precision) of the integration matrix for BLC-IRK scheme with $64$
nodes corresponding to the bandlimit $17\pi$ and, for comparison,
eigenvalues of the integration matrix of the standard IRK scheme $64$
Gauss-Legendre nodes. }

\end{figure}

\section{Applications\label{sec:Applications}}

\subsection{Algorithm\label{sec:alg} }

We use a (modified) fixed point iteration to solve (\ref{eq:intEqDis2}).
These equations are formulated on a large time interval in comparison
with the polynomial-based IRK schemes since we do not have to deal
with the excessive concentration of nodes near the end points. Thus,
the only constraint on the size of the interval is the requirement
that the (standard) fixed point iteration for (\ref{eq:intEqDis2})
converges . 

Let $N_{it}$ denote the number of iterations, which can either be
set to a fixed number or be determined adaptively. Labeling the intermediate
solutions in the iteration scheme as $\mathbf{y}^{(n)},\ n=1,\ldots,N_{it}$,
we have 
\begin{enumerate}
\item Initialize $y^{(1)}(t\tau_{m})=\mathbf{y_{0}},\ m=1,\ldots,M$. 
\item \textbf{For} $n=1,\ldots,N_{it}$\\
\textbf{For} $k=1,\dots,M$\\

\begin{enumerate}
\item Update the solution at the node $k$:\\
 \\
 $\mathbf{y}^{(n)}(t\tau_{k})=e^{t\tau_{k}\mathbf{L}}\mathbf{y_{0}}+\sum_{j=1}^{M}S_{kj}\, e^{t(\tau_{k}-\tau_{j})\mathbf{L}}\mathbf{g}(t\tau_{j},\mathbf{y}^{(n)}(t\tau_{j}))$
\\

\item Update the right hand side at the node $k$: $\mathbf{g}(t\tau_{k},y^{(n)}(t\tau_{k}))$ 
\end{enumerate}
\end{enumerate}
We note that the updated value of $y^{(n)}(t\tau_{k})$ is used in
the computation at the next node $\tau_{k+1}$ within the same iteration
$n$. This modification of the standard fixed point iteration is essential
for a faster convergence.
\begin{rem}
Although we currently apply the integration matrix directly, using
a large time interval and, consequently, a large number of nodes per
interval, opens a possibility of developing fast algorithms for this
purpose. Such algorithms may be faster than the direct application
of the matrix only for a sufficiently large matrix size and are typically
less efficient than the direct method if the size is relatively small.
Since we may choose many nodes, it makes sense to ask if the integration
matrix of an BLC-IRK type method may be applied in $\mathcal{O}\left(M\right)$
or $\mathcal{O}\left(M\log M\right)$ operations rather than $\mathcal{O}\left(M^{2}\right)$.
We mention an example of an algorithm for this purpose using the partitioned
low rank (PLR) representation (as it was described in e.g., \cite{BEY-SAN:2005})
but leave open a possibility of more efficient approaches. 
\end{rem}

\subsection{Problem of Orbit Determination\label{sub:Problem-of-Orbit}}

Let us consider the spherical harmonic model of a gravitational potential
of degree $N$, 
\begin{equation}
V^{(N)}(r,\theta,\lambda)=\frac{\mu}{r}\left(1+\sum_{n=2}^{N}\left(\frac{R}{r}\right)^{-n}Y_{n}(\theta,\lambda)\right),\label{eq:GravPotent}
\end{equation}
with
\begin{equation}
Y_{n}(\theta,\lambda)=\sum_{m=0}^{n}\,\bar{P}_{n}^{m}(\sin\theta)({\bar{C}}_{nm}\cos(m\lambda)+{\bar{S}}_{nm}\sin(m\lambda)),\label{eq:SphModelCoeffs}
\end{equation}
where ${\bar{P}}_{n}^{m}$ are normalized associated Legendre functions
and ${\bar{C}}_{nm}$ and ${\bar{S}}_{nm}$ are normalized gravitational
coefficients. In case of the Earth's gravitational model, $\mu$ is
the Earth's gravitational constant and $R$ is chosen to be the Earth's
equatorial radius. Choosing the Cartesian coordinates, we write $V^{(N)}\left(\mathbf{r}\right)$,
$\mathbf{r}=\left(x,y,z\right)$, assuming that the values $V^{(N)}\left(\mathbf{r}\right)$
are evaluated via (\ref{eq:GravPotent}) by changing from the Cartesian
to the spherical coordinates, $r=\sqrt{x^{2}+y^{2}+z^{2}}$, $\theta=\arcsin(z/r)$
and $\lambda=\arctan\left(y/x\right)$.

We formulate the system of ODEs in the Cartesian coordinates and denote
the solution as $\mathbf{r}(t)=\left(x(t),y(t),z(t)\right)$. Setting
$\mathbf{G}^{(N)}\left(\mathbf{r}\right)=\nabla V^{(N)}\left(\mathbf{r}\right)$,
we consider the initial value problem
\begin{equation}
\frac{d^{2}}{dt^{2}}\mathbf{r}(t)=-\mathbf{G}^{(N)}\left(\mathbf{r}(t)\right),\,\,\,\mathbf{r}(0)=\mathbf{r}_{0}=\left(\begin{array}{c}
x_{0}\\
y_{0}\\
z_{0}
\end{array}\right),\,\,\,\mathbf{r'}(0)=\mathbf{v}_{0}=\left(\begin{array}{c}
x'_{0}\\
y'_{0}\\
z'_{0}
\end{array}\right).\label{eq:NewtonsLawForOrbits}
\end{equation}

We observe that the first few terms of the Earth's gravitational models
are large in comparison with the rest of the model terms. For example,
in EGM96 \cite{L-K-F-T-P-C-C-K-L-T-1998}, the only non-zero coefficients
for $Y_{2}(\theta,\lambda)$ are $\bar{C}_{20}$, $\bar{C}_{22}$
and $\bar{S}_{22}$, where $\bar{C}_{20}\approx-0.48\cdot10^{-3}$,
$\bar{C}_{22}\approx0.24\cdot10^{-5}$, and $\bar{S}_{22}\approx-0.14\cdot10^{-5}$,
whereas the coefficients of the terms $Y_{n}(\theta,\lambda)$ with
$n\ge3$ are less than $0.14\cdot10^{-5}$. For this reason it makes
sense to split the force as 
\[
\mathbf{G}^{(N)}\left(\mathbf{r}\right)=\mathbf{G}^{(2)}\left(\mathbf{r}\right)+\left(\mathbf{G}^{(N)}\left(\mathbf{r}\right)-\mathbf{G}^{(2)}\left(\mathbf{r}\right)\right)
\]
and use only $\mathbf{G}^{(2)}\left(\mathbf{r}\right)$ in most of
the iterations (since using the full model, $\mathbf{G}^{(N)}\left(\mathbf{r}\right)$,
may be expensive). 

We first use the gravity model of degree $N=2$ on a large portion
of an orbit (e.g., $1/2$ of a period) to solve the system of nonlinear
equations via fixed point iteration. Once the approximate solution
$\tilde{\mathbf{r}}(t)$ to 
\[
\frac{d^{2}}{dt^{2}}\tilde{\mathbf{r}}(t)=-\mathbf{G}^{(2)}\left(\tilde{\mathbf{r}}(t)\right),\,\,\,\tilde{\mathbf{r}}(0)=\mathbf{r}_{0}=\left(\begin{array}{c}
x_{0}\\
y_{0}\\
z_{0}
\end{array}\right),\,\,\,\tilde{\mathbf{r}}'(0)=\mathbf{v}_{0}=\left(\begin{array}{c}
x'_{0}\\
y'_{0}\\
z'_{0}
\end{array}\right),
\]
is obtained, we then access the full gravity model $\mathbf{G}^{(N)}\left(\tilde{\mathbf{r}}(t\tau_{j})\right)$
to evaluate the forces at the nodes $\tau_{j}$ which, by now, are
located close to their correct positions. We continue iteration (without
accessing the full gravity model again) to adjust the orbit. This
results in an essentially correct trajectory. At this point we may
(and currently do) access the full gravity model $\mathbf{G}^{(N)}$
one more time to evaluate the gravitational force and perform another
iteration. Thus, we access the full gravity model at most twice per
node while the number of nodes is substantially lower than in traditional
methods. 

Next, let us write the orbit determination problem in a form that
conforms with the algorithm in Section \ref{sec:alg}. Effectively,
we make use of the fact that system (\ref{eq:NewtonsLawForOrbits})
is of the second order. We define the six component vector
\[
\mathbf{u}(t)=\left[\begin{array}{c}
\mathbf{r}(t)\\
\mathbf{r}'(t)
\end{array}\right]=\left[\begin{array}{c}
\mathbf{r}(t)\\
\mathbf{v}(t)
\end{array}\right],
\]
where $\mathbf{r}'(t)=\mathbf{v}(t)$ is the velocity, and the matrix
\[
\mathbf{L}=\left(\begin{array}{cc}
\mathbf{0} & \mathbf{I}\\
\mathbf{0} & \mathbf{0}
\end{array}\right),
\]
where $\mathbf{I}$ is $3\times3$ identity matrix. We have 
\begin{equation}
\frac{d}{dt}\left[\begin{array}{c}
\mathbf{r}(t)\\
\mathbf{v}(t)
\end{array}\right]=\mathbf{L}\left[\begin{array}{c}
\mathbf{r}(t)\\
\mathbf{v}(t)
\end{array}\right]+\left[\begin{array}{c}
\mathbf{0}\\
-\mathbf{G}^{(N)}\left(\mathbf{r}(t)\right)
\end{array}\right],\label{eq:FirstOrderSystem}
\end{equation}
and the orbit determination problem is now given by (\ref{eq:intEqELP})
with appropriate forces as follow from (\ref{eq:FirstOrderSystem}).
Using (\ref{eq:intEqELP}) accelerates convergence of the fixed point
iteration in our scheme.

\subsection{Example}

We present an example of using our method. An extensive study of the
method for applications in astrodynamics may be found in \cite{B-J-B-S-A:2013}
(see also \cite{B-J-B-A:2012p}) and here we simply demonstrate that
our scheme allows computations on large time intervals and requires
relatively few evaluations of the full gravity model. Since the cost
of evaluating the full (high-degree) gravity model is substantial,
this results in significant computational savings.

As an example, we simulate an orbit with initial condition 
\[
\mathbf{r}_{\left|t=0\right.}=\left(\begin{array}{c}
x_{0}\\
y_{0}\\
z_{0}
\end{array}\right)=\left(\begin{array}{r}
2284.060\\
6275.400\\
4.431
\end{array}\right)\ ({\rm km)}
\]
 and 
\[
\frac{d\mathbf{r}}{dt}_{\left|t=0\right.}=\mathbf{v}_{0}=\left(\begin{array}{r}
-5.947\\
2.164\\
0
\end{array}\right)\,({\rm km/s}),
\]
and propagate it for 86,000 seconds (approximately 1 day). We use
$22$ time intervals and, on each interval, quadratures with $74$
nodes. Hence, on average, this corresponds to time distance between
nodes of approximately $53$ seconds. For the full gravitational model
we use a $70$ degree spherical harmonics model WGS84 \cite{WGS:1984}.

Using the 8th-order Gauss-Jackson integration scheme with very fine
sampling (one second time step), we generate the reference solution.
We selected the Gauss-Jackson method since it is often used for orbit
computations in astrodynamics; we refer to \cite{B-J-B-S-A:2013,B-J-B-A:2012p}
for a more detailed discussion on the issue of generating reference
solutions.

We then compute the orbit trajectory using the algorithm from Section~\ref{sec:alg}
adopted to the problem of orbit propagation as described in Section~\ref{sub:Problem-of-Orbit}
and compare the result with the reference solution. Achieving an error
of less than $5$ cm at the final time, we need $6512$ evaluations
of the reduced (3-term) gravitational model, and $3256$ evaluations
of the full gravitational model.

\section{Conclusions}

We have constructed an implicit, symplectic integrator that has speed
comparable to explicit multistep integrators currently used for orbit
computation. The key difference with the traditional IRK method is
that our scheme uses quadratures for band-limited exponentials rather
than the traditional Gaussian quadratures constructed for the orthogonal
Legendre polynomials. The nodes of quadratures for band-limited exponentials
do not concentrate excessively towards the end points of an interval
thus removing a practical limit on the number of nodes used within
each time interval.

\section{Appendix}

In both approaches described below we use nodes of generalized Gaussian
quadratures for exponentials $\left\{ \tau_{l}\right\} _{l=1}^{M}$
constructed in \cite{BEY-MON:2002} (see Lemma~\ref{lem:Quadratures}).
Some of the steps may require extended precision to yield accurate
results.

\subsection{Computing integration matrix using exact PSWFs}

In this approach we assume that the solutions $\psi_{j}^{c}(x)$ and
the eigenvalues $\lambda_{j}$ satisfying 
\begin{equation}
\left(F_{c}\psi_{j}^{c}\right)(x)=\int_{-1}^{1}e^{icxy}\psi_{j}^{c}(y)dy=\lambda_{j}\psi_{j}^{c}(x),\label{eq:EigProblem}
\end{equation}
where $F_{c}$ is defined in (\ref{PSWFdef}), are available. We use
(\ref{eq:ExactInterpolatingFnc}) and the matrix of values of PSWFs
at the nodes, $\psi_{j}^{c}(\tau_{l})$, to compute coefficients $\alpha_{kj}$,
so that we have 
\[
R_{k}^{c}(\tau)=\sum_{j=0}^{M-1}\alpha_{kj}\psi_{j}^{c}(\tau),\,\,\,\, k=1,\dots M.
\]
We then compute weights using (\ref{eq:DefWeights}),
\[
w_{k}=\int_{-1}^{1}R_{k}^{c}(x)dx=\sum_{j=0}^{M-1}\alpha_{kj}\int_{-1}^{1}\psi_{j}^{c}(x)dx=\sum_{j=0}^{M-1}\alpha_{kj}\lambda_{j}\psi_{j}^{c}(0).
\]
Next we define
\[
K_{l}^{c}(x)=\int_{-1}^{x}R_{l}^{c}(s)ds=\sum_{j=0}^{M-1}\alpha_{lj}\int_{-1}^{x}\psi_{j}^{c}(s)ds=\sum_{j=0}^{M-1}\alpha_{lj}\Phi_{j}^{c}(x),
\]
where 
\begin{equation}
\Phi_{j}^{c}(x)=\int_{-1}^{x}\psi_{j}^{c}(s)ds.\label{eq:Primitive}
\end{equation}
In order to compute the integration matrix (\ref{eq:IntegrationMatrixDef}),
we need to evaluate 
\[
w_{k}S_{kl}=\int_{-1}^{1}K_{l}^{c}(x)R_{k}^{c}(x)dx=\sum_{j,j'=0}^{M-1}\alpha_{lj}\alpha_{kj'}\int_{-1}^{1}\Phi_{j}^{c}(x)\psi_{j'}^{c}(x)dx=\sum_{j,j'=0}^{M-1}\alpha_{lj}\alpha_{kj'}I_{jj'},
\]
where 
\begin{equation}
I_{jj'}=\int_{-1}^{1}\Phi_{j}^{c}(x)\psi_{j'}^{c}(x)dx=\int_{-1}^{1}\Phi_{j}^{c}(x)\frac{d}{dx}\Phi_{j'}^{c}(x)dx.\label{eq:IntegralToCompute}
\end{equation}
We have
\begin{prop}
If $j$ and $j'$ are both even, then
\begin{equation}
I_{jj'}=I_{j'j}=\frac{1}{2}\lambda_{j}\lambda_{j'}\psi_{j}^{c}(0)\psi_{j'}^{c}(0).\label{eq:even-even}
\end{equation}
If $j$ and $j'$ are both odd, then
\begin{equation}
I_{jj'}=0.\label{eq:odd-odd}
\end{equation}
If $j$ is even and $j'$ is odd, then
\begin{equation}
I_{jj'}=-I_{j'j},\label{eq:odd-even-0}
\end{equation}
\begin{equation}
I_{jj'}=\frac{\lambda_{j'}}{ic\lambda_{j}}\int_{-1}^{1}\psi_{j}^{c}(y)\frac{\psi_{j'}^{c}(y)}{y}dy\label{eq:odd-even-1}
\end{equation}
and
\begin{equation}
I_{j'j}=\frac{\lambda_{j}}{ic\lambda_{j'}}\left(\int_{-1}^{1}\psi_{j}^{c}(y)\frac{\psi_{j'}^{c}(y)}{y}dy-2\psi_{j}^{c}(0)\int_{0}^{1}\frac{\psi_{j'}^{c}(y)}{y}dy+ic\psi_{j}^{c}(0)\overline{\lambda}_{j'}\int_{0}^{1}\psi_{j'}^{c}(y)dy\right).\label{eq:odd-even-2}
\end{equation}
 
\end{prop}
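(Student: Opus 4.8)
The plan is to exploit two structural facts about the PSWFs that follow from (\ref{eq:EigProblem}): their definite parity, $\psi_j^c(-x)=(-1)^j\psi_j^c(x)$ (so $\psi_j^c$ is real with the parity of the $j$-th Legendre polynomial), and the induced parity of the eigenvalues, $\lambda_j$ real for even $j$ and purely imaginary for odd $j$. I would also record the boundary values $\Phi_j^c(-1)=0$ and $\Phi_j^c(1)=\int_{-1}^1\psi_j^c(s)\,ds=\lambda_j\psi_j^c(0)$, the latter being (\ref{eq:EigProblem}) evaluated at $x=0$. First I would dispatch the two diagonal-parity cases by symmetry alone. For $j,j'$ both odd, $\Phi_j^c$ is even while $\psi_{j'}^c$ is odd, so the integrand in (\ref{eq:IntegralToCompute}) is odd and $I_{jj'}=0$, giving (\ref{eq:odd-odd}). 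For $j,j'$ both even, the relation $\Phi_j^c(-x)=\Phi_j^c(1)-\Phi_j^c(x)$ forces the splitting $\Phi_j^c=\tfrac12\Phi_j^c(1)+\Theta_j$ with $\Theta_j$ odd; integrating against the even function $\psi_{j'}^c$ annihilates $\Theta_j$ and leaves $\tfrac12\Phi_j^c(1)\Phi_{j'}^c(1)=\tfrac12\lambda_j\lambda_{j'}\psi_j^c(0)\psi_{j'}^c(0)$, which is (\ref{eq:even-even}) and is manifestly symmetric.

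For the mixed case ($j$ even, $j'$ odd) the antisymmetry (\ref{eq:odd-even-0}) is immediate from integration by parts, since $I_{jj'}+I_{j'j}=[\Phi_j^c\Phi_{j'}^c]_{-1}^1$ and every boundary contribution vanishes: $\Phi_j^c(-1)=\Phi_{j'}^c(-1)=0$ and $\Phi_{j'}^c(1)=0$ (odd index). The heart of the argument is an integral representation of the primitive, obtained by inserting (\ref{eq:EigProblem}) into (\ref{eq:Primitive}) and integrating in $s$:
\[
\Phi_j^c(x)=\frac{1}{\lambda_j}\int_{-1}^1\psi_j^c(y)\,\frac{e^{icxy}-e^{-icy}}{icy}\,dy .
\]
Substituting this into $I_{jj'}$, exchanging the order of integration (Fubini), and applying the eigenfunction identity $\int_{-1}^1\psi_{j'}^c(x)e^{icxy}\,dx=\lambda_{j'}\psi_{j'}^c(y)$ collapses the inner integral; the $e^{-icy}$ contribution drops out because $\int_{-1}^1\psi_{j'}^c=0$, and one reads off (\ref{eq:odd-even-1}) directly. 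The residual $1/y$ is harmless since $\psi_{j'}^c(0)=0$ for odd $j'$.

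For $I_{j'j}$ the same computation produces an extra term, because now $\int_{-1}^1\psi_j^c=\lambda_j\psi_j^c(0)\neq0$, yielding $I_{j'j}=\frac{\lambda_j}{ic\lambda_{j'}}\left(\int_{-1}^1 y^{-1}\psi_j^c\psi_{j'}^c\,dy-\psi_j^c(0)\int_{-1}^1 y^{-1}\psi_{j'}^c(y)e^{-icy}\,dy\right)$, and parity reduces $e^{-icy}$ to $\cos(cy)$. The main obstacle is then the identity converting this $\cos$-weighted singular integral into the form in (\ref{eq:odd-even-2}), namely
\[
\int_{-1}^1 y^{-1}\psi_{j'}^c(y)\cos(cy)\,dy=2\int_0^1 y^{-1}\psi_{j'}^c(y)\,dy-ic\,\overline{\lambda}_{j'}\int_0^1\psi_{j'}^c(y)\,dy .
\]
I would prove it by writing $y^{-1}(1-\cos cy)=\int_0^c\sin(ty)\,dt$, swapping integrals, and using the sine form of (\ref{eq:EigProblem}) for odd $j'$, $\int_0^1\psi_{j'}^c(y)\sin(ty)\,dy=\tfrac{\lambda_{j'}}{2i}\psi_{j'}^c(t/c)$; integrating in $t$ and rescaling gives $2\int_0^1 y^{-1}\psi_{j'}^c(y)(1-\cos cy)\,dy=-ic\,\lambda_{j'}\int_0^1\psi_{j'}^c\,dy$, which matches the required right-hand side once $\overline{\lambda}_{j'}=-\lambda_{j'}$ (purely imaginary eigenvalue) is invoked. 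This last identity is the only genuinely nonobvious step; everything else is parity bookkeeping and a single Fubini exchange.
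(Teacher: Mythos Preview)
Your argument is correct and follows the paper's overall strategy: the same integral representation of $\Phi_j^c$ via the eigenfunction identity, the same Fubini step to obtain the general formula
\[
I_{jj'}=\frac{\lambda_{j'}}{ic\lambda_j}\Bigl(\int_{-1}^1 \psi_j^c(y)\,\frac{\psi_{j'}^c(y)}{y}\,dy-\psi_{j'}^c(0)\int_{-1}^1\frac{\psi_j^c(y)}{y}\,e^{-icy}\,dy\Bigr),
\]
and the same specialization to the mixed-parity case. There are two places where your route differs from the paper's. For the diagonal-parity cases you argue directly by parity (odd $\Phi_j^c$ is even, etc.), whereas the paper appeals only to the integration-by-parts relation $I_{jj'}+I_{j'j}=\lambda_j\lambda_{j'}\psi_j^c(0)\psi_{j'}^c(0)$; your treatment is actually more complete here, since that relation alone gives only $I_{jj'}+I_{j'j}=0$ in the odd--odd case and does not by itself yield $I_{jj'}=I_{j'j}$ in the even--even case. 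For the final identity needed in (\ref{eq:odd-even-2}) the paper instead introduces $u(x)=\int_{-1}^1 y^{-1}\psi_{j'}^c(y)e^{-icyx}\,dy$, differentiates to get $u'(x)=-ic\,\overline{\lambda}_{j'}\psi_{j'}^c(x)$, and integrates from $0$ to $1$; your route via $y^{-1}(1-\cos cy)=\int_0^c\sin(ty)\,dt$ and the sine form of the eigenrelation is an equivalent computation in disguise (both are evaluating the same antiderivative), and yields the same result with comparable effort.
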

We use (\ref{eq:odd-even-1}) if $\left|\lambda_{j'}\right|<\left|\lambda_{j}\right|$,
(\ref{eq:odd-even-2}) otherwise. 
\begin{proof}
Integrating (\ref{eq:IntegralToCompute}) by parts, we obtain
\begin{equation}
I_{jj'}+I_{j'j}=\Phi_{j}^{c}(1)\Phi_{j'}^{c}(1)-\Phi_{j}^{c}(-1)\Phi_{j'}^{c}(-1)=\lambda_{j}\lambda_{j'}\psi_{j}^{c}(0)\psi_{j'}^{c}(0)\label{eq:relation}
\end{equation}
and, since $\psi_{j}(0)=0$ if $j$ is odd (due to parity of PSWFs),
we arrive at (\ref{eq:odd-odd}) and (\ref{eq:odd-even-0}).

Using (\ref{eq:Primitive}) and (\ref{eq:EigProblem}), we have 
\[
\Phi_{j}^{c}(x)=\frac{1}{\lambda_{j}}\int_{-1}^{1}\left(\int_{-1}^{x}e^{icys}ds\right)\psi_{j}^{c}(y)dy=\frac{1}{\lambda_{j}}\int_{-1}^{1}\frac{e^{icyx}-e^{-icy}}{icy}\psi_{j}^{c}(y)dy,
\]
and, thus,
\begin{eqnarray}
I_{jj'} & = & \frac{1}{\lambda_{j}}\int_{-1}^{1}\left[\int_{-1}^{1}\frac{e^{icyx}-e^{-icy}}{icy}\psi_{j}^{c}(y)dy\right]\psi_{j'}^{c}(x)dx\nonumber \\
 & = & \frac{\lambda_{j'}}{ic\lambda_{j}}\left(\int_{-1}^{1}\psi_{j}^{c}(y)\frac{\psi_{j'}^{c}(y)}{y}dy-\psi_{j'}^{c}(0)\int_{-1}^{1}\frac{\psi_{j}^{c}(y)}{y}e^{-icy}dy\right).\label{eq:integral-for-derivation}
\end{eqnarray}
It follows from (\ref{eq:integral-for-derivation}) that if $j$ is
even and $j'$ is odd (so that $\psi_{j'}^{c}(0)=0$), we obtain (\ref{eq:odd-even-1})
and 
\[
I_{j'j}=\frac{\lambda_{j}}{ic\lambda_{j'}}\left(\int_{-1}^{1}\psi_{j}^{c}(y)\frac{\psi_{j'}^{c}(y)}{y}dy-\psi_{j}^{c}(0)\int_{-1}^{1}\frac{\psi_{j'}^{c}(y)}{y}e^{-icy}dy\right).
\]
Introducing
\[
u(x)=\int_{-1}^{1}\frac{\psi_{j'}^{c}(y)}{y}e^{-icyx}dy,
\]
we have
\[
u'(x)=-ic\int_{-1}^{1}\psi_{j'}^{c}(y)\, e^{-icyx}dy=-ic\overline{\lambda}_{j'}\psi_{j'}^{c}(x)
\]
so that 
\[
u(x)=u(a)-ic\overline{\lambda}_{j'}\int_{a}^{x}\psi_{j'}^{c}(s)ds.
\]
Setting $x=1$ and $a=0$, we obtain
\begin{eqnarray*}
\int_{-1}^{1}\frac{\psi_{j'}^{c}(y)}{y}e^{-icy}dy & = & \int_{-1}^{1}\frac{\psi_{j'}^{c}(y)}{y}dy-ic\overline{\lambda}_{j'}\int_{0}^{1}\psi_{j'}^{c}(y)dy\\
 & = & 2\int_{0}^{1}\frac{\psi_{j'}^{c}(y)}{y}dy-ic\overline{\lambda}_{j'}\int_{0}^{1}\psi_{j'}^{c}(y)dy
\end{eqnarray*}
and arrive at (\ref{eq:odd-even-2}).
\end{proof}

\subsection{Computing integration matrix using approximate PSWFs}

If the interpolating basis for band-limited functions is defined via
(\ref{eq:rkx}), then the coefficients $r_{kl}$ are obtained using
\begin{equation}
\delta_{km}=R_{k}(\tau_{m})=\sum_{l=1}^{M}r_{kl}e^{ic\tau_{l}\tau_{m}}\label{eq:IntProl}
\end{equation}
by inverting the matrix $E=\left\{ e^{ic\tau_{l}\tau_{m}}\right\} _{l,m=1,\dots M}$.
We have

\[
K_{k}(x)=\int_{-1}^{x}R_{k}(s)ds=\sum_{l=1}^{M}r_{kl}\frac{e^{ic\tau_{l}x}-e^{-ic\tau_{l}}}{ic\tau_{l}}
\]
and compute 
\begin{eqnarray*}
w_{k}S_{kl} & = & \int_{-1}^{1}K_{l}(x)R_{k}(x)dx\\
 & = & \sum_{j,j'=1,\dots M}r_{kj}r_{lj'}\int_{-1}^{1}e^{ic\tau_{j}x}\frac{e^{ic\tau_{j'}x}-e^{-ic\tau_{j'}}}{ic\tau_{j'}}dx\\
 & = & \sum_{j,j'=1,\dots M}r_{kj}r_{lj'}G_{jj'},
\end{eqnarray*}
where
\[
G_{jj'}=2\frac{\mbox{sinc}\left(c\left(\tau_{j}+\tau_{j'}\right)\right)-e^{-ic\tau_{j'}}\mbox{sinc}\left(c\tau_{j}\right)}{ic\tau_{j'}}.
\]
 Thus, we have
\[
w_{k}S_{kl}=\left(E^{-1}GE^{-1}\right)_{kl}.
\]

\bibliographystyle{plain}
%\bibliography{common}

\end{document}